\theoremstyle{plain}
\newtheorem{theorem}{Theorem}[section]
\newtheorem{lemma}[theorem]{Lemma}
\newtheorem{corollary}[theorem]{Corollary}
\theoremstyle{definition}
\newtheorem{definition}[theorem]{Definition}
\theoremstyle{remark}
\newtheorem{remark}[theorem]{Remark}
\DeclareMathOperator{\sgn}{sgn}
\DeclareMathOperator{\ord}{ord}
\begin{document}
\title[Straightening Law]{On the Straightening Law for Minors of a Matrix }
\author{Richard G. Swan}
\address{Department of Mathematics\\
The University of Chicago\\
Chicago, IL 60637}
\email{swan@math.uchicago.edu}
\dedicatory{In memory of Gian Carlo Rota}
\footnote{I would like to thank Darij Grinberg for many
corrections to an earlier version of this paper}

\begin{abstract}
We give a simple new proof for the straightening law
of Doubilet, Rota, and Stein using a generalization
of the Laplace expansion of a determinant.
\end{abstract}

\maketitle

\section{Introduction}
The straightening law was proved in \cite{DRS} by
Doubilet, Rota, and Stein generalizing earlier work
of Hodge \cite{H}. Since then a number of other
proofs have been given \cite{DKR,DEPy,ABW}. The object
of the present paper is to offer yet another proof of
this result based on a generalization of the Laplace
expansion of a determinant. This proof has the advantage
(to some!) of not requiring any significant amount of
combinatorics, Young diagrams, etc. On the other hand,
for the same reason, it does not show the interesting
relations between the straightening law and invariant
theory but these are very well covered in the above
references and in \cite{DEPh}. For completeness, I have
also included a proof of the linear independence of the
standard monomials.

\section{Laplace Products}
Let $X=(x_{ij})$ be an $m\times n$ matrix where $1\le i\le m$
and $1\le j\le n$. If $A\subseteq\{1,\ldots,m\}$,
$B\subseteq\{1,\ldots,n\}$, and $A$ and $B$
have the same number of elements
we define $X(A|B)$ to be the
minor determinant of $X$ with row indices in $A$
and column indices in $B$. I will usually just write
$(A|B)$ for $X(A|B)$ when it is clear what $X$ is. I will
write $|A|$ for the number of elements in $A$. We set
$X(A|B)=0$ if $|A|\neq |B|$.

I will write $\widetilde A$ for the complement $\{1,\ldots,m\}-A$
and $\widetilde B$ for $\{1,\ldots,n\}-B$. Also $\sum A$ will
denote the sum of the elements of $A$.

\begin{definition}\label{def1}
If $m=n$, we define the Laplace product $X\{A|B\}$ to be
$X\{A|B\}=(-1)^{\sum A+\sum B}(A|B)(\widetilde A|\widetilde B)$.
\end{definition}

If $X$ is understood, I will just write $\{A|B\}$ for $X\{A|B\}$.
This notation is, of course, for this paper only and is
not recommended for general use.

The terminology comes from the Laplace expansion 
\begin{equation}\label{eq1}
\det X = \sum_{|S|=|B|}\{S|B\} = \sum_{|T|=|A|}\{A|T\}
\end{equation}
where $A$ and $B$ are fixed.

The following lemma explains the sign in Definition~\ref{def1}.

\begin{lemma}\label{lem1}
Let $y_{ij}=x_{ij}$ if $(i,j)$ lies in $A\times B$ or in
$\widetilde A\times\widetilde B$ and let $y_{ij}=0$
otherwise. Then $X\{A|B\}=\det (y_{ij})$.
\end{lemma}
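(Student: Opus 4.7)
The plan is to permute the rows and columns of $Y=(y_{ij})$ so that it becomes block-diagonal with diagonal blocks $X(A|B)$ and $X(\widetilde A|\widetilde B)$, and then compute the sign of that permutation.

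Concretely, I would list the rows in the order ``elements of $A$ (increasing), then elements of $\widetilde A$ (increasing),'' and the columns similarly using $B$ and $\widetilde B$. The hypothesis that $y_{ij}=0$ outside $A\times B\cup\widetilde A\times\widetilde B$ ensures that the rearranged matrix $Y'$ has two nonzero blocks: an upper-left $|A|\times|B|$ block equal to $X(A|B)$, and a lower-right $|\widetilde A|\times|\widetilde B|$ block equal to $X(\widetilde A|\widetilde B)$, with the two off-diagonal blocks identically zero.

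If $|A|\neq|B|$ (which forces $|\widetilde A|\neq|\widetilde B|$ as well, since $m=n$), both minors and hence the right-hand side vanish; on the other side, $Y'$ has a nonsquare nonzero block in a sea of zeros, so its rank is deficient and $\det Y=0$. When $|A|=|B|=k$, block-diagonality gives $\det Y'=X(A|B)\,X(\widetilde A|\widetilde B)$, and $\det Y=\varepsilon_r\varepsilon_c\det Y'$ where $\varepsilon_r,\varepsilon_c$ are the signs of the row and column permutations used.

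The one step with any content is evaluating $\varepsilon_r\varepsilon_c$, which will be the main (indeed the only) obstacle. Writing $A=\{a_1<\cdots<a_k\}$, the row permutation has $\sum_{i=1}^{k}(a_i-i)=\sum A-\binom{k+1}{2}$ inversions, since $a_i$ must pass over the $a_i-i$ elements of $\widetilde A$ that are smaller than it; the column permutation contributes the analogous $\sum B-\binom{k+1}{2}$. The total exponent is $\sum A+\sum B-k(k+1)$, and since $k(k+1)$ is even we obtain $\varepsilon_r\varepsilon_c=(-1)^{\sum A+\sum B}$, which matches Definition~\ref{def1} exactly.
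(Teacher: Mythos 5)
Your proof is correct and follows essentially the same route as the paper: rearrange rows and columns to put the $A\times B$ block in the upper left, observe the block structure gives $(A|B)(\widetilde A|\widetilde B)$, and compute the sign of the permutation as $(-1)^{\sum(a_i-i)+\sum(b_j-j)}=(-1)^{\sum A+\sum B}$. Your explicit handling of the degenerate case $|A|\neq|B|$ is a small addition the paper leaves implicit (both sides vanish by the conventions already set).
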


\begin{proof}
Rearrange the rows and columns of $Y=(y_{ij})$ so that those with
indices in $A$ and $B$ lie in the upper left hand corner. The
resulting matrix has determinant $(A|B)(\widetilde A|\widetilde B)$.
The sign of the permutation of rows and columns is
$(-1)^{\sum A+\sum B}$ by the next lemma.
\end{proof}

\begin{lemma}
Let $A=\{a_1<\cdots<a_p\}$ be a subset of $\{1,\ldots,n\}$ and
let $\{c_1<\cdots<c_q\}=\{1,\ldots,n\}-A$. Then the sign of
the permutation taking $\{1,\ldots,n\}$ to
$\{a_1,\ldots,a_p,c_1,\ldots,c_q\}$
is $(-1)^{\sum (a_i-i)}$.
\end{lemma}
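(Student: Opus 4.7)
The plan is to compute the sign by counting inversions of the permutation $\pi$ defined by $\pi(i)=a_i$ for $1\le i\le p$ and $\pi(p+j)=c_j$ for $1\le j\le q$. Recall that the sign of $\pi$ is $(-1)^{\operatorname{inv}(\pi)}$ where $\operatorname{inv}(\pi)$ is the number of pairs $k<l$ with $\pi(k)>\pi(l)$.

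I would split the potential inversions into three cases according to the block to which the indices belong. If both $k$ and $l$ lie in $\{1,\dots,p\}$, then $\pi(k)=a_k<a_l=\pi(l)$ since the $a_i$ are listed in increasing order, so no inversion occurs. Similarly there is no inversion when both indices lie in $\{p+1,\dots,n\}$, since the $c_j$ are also increasing. Thus every inversion must be of mixed type: $k=i\le p$ and $l=p+j$ with $\pi(k)=a_i>c_j=\pi(l)$. So $\operatorname{inv}(\pi)$ equals the number of pairs $(i,j)$ with $a_i>c_j$.

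To evaluate this count, I would fix $i$ and count the $c_j$ with $c_j<a_i$. The elements of $\{1,\dots,n\}$ strictly less than $a_i$ are exactly $1,2,\dots,a_i-1$, a total of $a_i-1$ elements. Of these, the ones belonging to $A$ are precisely $a_1,\dots,a_{i-1}$, giving $i-1$ elements, and the remaining $(a_i-1)-(i-1)=a_i-i$ elements are $c_j$'s. Summing over $i$ gives
\[
\operatorname{inv}(\pi)=\sum_{i=1}^{p}(a_i-i),
\]
which yields the stated sign $(-1)^{\sum(a_i-i)}$.

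There is no real obstacle here; the only thing to be careful about is consistently distinguishing $\pi$ from its inverse (one could equally well analyze $\pi^{-1}$, which sends $a_i\mapsto i$ and $c_j\mapsto p+j$, and obtain the same inversion count by a symmetric case analysis).
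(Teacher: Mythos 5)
Your proof is correct. It is, however, a slightly different argument from the one in the paper: you compute the sign as $(-1)^{\operatorname{inv}(\pi)}$ by directly counting inversions of the shuffle permutation (showing that the only inversions are mixed pairs $a_i > c_j$, and that for each fixed $i$ there are exactly $a_i - i$ such $c_j$), whereas the paper realizes the permutation explicitly as a product of adjacent transpositions --- moving $a_1$ to position $1$, then $a_2$ to position $2$, and so on, each move costing $a_i - i$ transpositions --- and counts those. The two counts agree, of course, since both equal the length of the permutation. Your version has the advantage of being self-contained and airtight once one accepts the inversion-number formula for the sign; the paper's version is more terse and leaves to the reader the small check that after the first $i-1$ moves the element $a_i$ still sits at position $a_i$, so that the $i$-th move indeed uses $a_i - i$ transpositions. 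Your counting step --- that of the $a_i - 1$ elements below $a_i$ exactly $i-1$ lie in $A$, so $a_i - i$ are $c_j$'s --- is exactly right, and your remark about $\pi$ versus $\pi^{-1}$ is harmless since they have the same sign.
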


\begin{proof}
Starting with $\{1,\ldots,n\}$ 
move $a_1$ to position $1$, then $a_2$ to position $2$, etc., each
time keeping the remaining elements in their given order. The number of
transpositions used is $\sum (a_i-i)$.
\end{proof}

The Laplace expansion (\ref{eq1}) gives us a non-trivial
relation between the Laplace products of $X$. This suggests
looking for more general relations of the form
\begin{equation}\label{eq2}
\sum_i a_i\{S_i|T_i\}=0
\end{equation}
with constant $a_i$.
          
Since $\{A|B\}$ is multilinear in the rows of $X$ it will suffice
to check a relation (\ref{eq2}) for the case in which the rows
of $X$ are all of the form $0,\ldots,0,1,0,\ldots,0$. Since $\{A|B\}$
is also multilinear in the columns of $X$, all terms of (\ref{eq2})
will be $0$ unless there is a $1$ in each column. Therefore it will
suffice to check a relation (\ref{eq2}) for the case where $X$ is
a permutation matrix, $X=P(\sigma^{-1})=(\delta_{\sigma i,j})$. To
do this we first compute $\{A|B\}$ for this $X$.

\begin{lemma}
If $X=(\delta_{\sigma i,j})$ is a permutation matrix
$P(\sigma^{-1})$ then $\{A|B\}=\sgn\sigma$ if $\sigma A=B$
and $\{A|B\}=0$ otherwise.
\end{lemma}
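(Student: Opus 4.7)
The plan is to apply Lemma~\ref{lem1} directly and split into two cases according to whether $\sigma A = B$. By Lemma~\ref{lem1}, $X\{A|B\} = \det Y$, where $Y$ agrees with $X$ on $(A\times B)\cup(\widetilde A\times\widetilde B)$ and is zero elsewhere. The key observation is that row $i$ of $X = P(\sigma^{-1})$ has its unique nonzero entry (namely $1$) at column $\sigma i$, so the support of $X$ is the set of pairs $\{(i,\sigma i) : 1 \le i \le n\}$.

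If $\sigma A = B$, then since $\sigma$ is a bijection of $\{1,\ldots,n\}$ we also have $\sigma\widetilde A = \widetilde B$. Every support pair $(i,\sigma i)$ therefore lies in $A\times B$ (when $i\in A$) or in $\widetilde A\times\widetilde B$ (when $i\in\widetilde A$), so no entry is zeroed out and $Y=X$. Hence $X\{A|B\} = \det X = \sgn(\sigma^{-1}) = \sgn\sigma$.

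If $\sigma A \neq B$, then since $|\sigma A| = |A| = |B|$ there is some $i \in A$ with $\sigma i \in \widetilde B$. The only possibly nonzero entry in row $i$ of $X$ sits at position $(i,\sigma i) \in A \times \widetilde B$, which is killed in passing to $Y$. So row $i$ of $Y$ is identically zero and $\det Y = 0$.

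There is no real obstacle here: once Lemma~\ref{lem1} has been used to reinterpret $\{A|B\}$ as a genuine determinant, the only point to keep straight is the convention $X = P(\sigma^{-1})$, which still gives $\det X = \sgn \sigma$.
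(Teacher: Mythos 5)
Your proof is correct and is exactly the paper's argument spelled out in detail: the paper simply declares the lemma ``immediate from Lemma~\ref{lem1} and the fact that $\det X$ becomes $0$ if any entry $1$ is replaced by $0$,'' and your two cases (all of the $1$'s survive when $\sigma A=B$, giving $\det Y=\det X=\sgn\sigma$; some row of $Y$ is zeroed out when $\sigma A\neq B$) are precisely that observation made explicit.
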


This is immediate from Lemma~\ref{lem1} and the fact that $\det X$
becomes $0$ if any entry $1$ is replaced by $0$.

\begin{corollary}
A relation $\sum_i a_i\{S_i|T_i\}=0$ between Laplace
products (with constant $a_i$) holds if and only if for each $\sigma$
in $\mathcal{S}_n$ we have $\sum a_i=0$ over those $i$ with $\sigma S_i=T_i$.
\end{corollary}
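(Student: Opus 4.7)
The plan is to combine the multilinearity reduction from the paragraph preceding the lemma with the explicit evaluation of $\{A|B\}$ at a permutation matrix that the lemma provides. The corollary is essentially the synthesis of these two inputs.

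For the forward implication, I would simply specialize. Given a relation $\sum_i a_i\{S_i|T_i\}=0$ that holds as a polynomial identity in the $x_{ij}$, fix an arbitrary $\sigma\in\mathcal{S}_n$ and substitute $X=P(\sigma^{-1})$. By the preceding lemma, $\{S_i|T_i\}=\sgn\sigma$ when $\sigma S_i=T_i$ and $\{S_i|T_i\}=0$ otherwise, so the relation collapses to $(\sgn\sigma)\sum_{\sigma S_i=T_i}a_i=0$. Dividing by $\sgn\sigma$ gives the desired coefficient equation for this $\sigma$.

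For the converse, I would invoke the reduction already spelled out in the paragraph preceding the lemma. Multilinearity of each $\{A|B\}$ in the rows of $X$ reduces checking the polynomial identity $\sum_i a_i\{S_i|T_i\}=0$ to the case where each row of $X$ is a standard basis vector; multilinearity in the columns then forces every term to vanish unless each column also contains a single $1$, i.e.\ unless $X$ is a permutation matrix $P(\sigma^{-1})$. For such an $X$ the lemma turns the relation into $(\sgn\sigma)\sum_{\sigma S_i=T_i}a_i=0$, and the hypothesis guarantees this for every $\sigma$.

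I do not expect a serious obstacle: the corollary is a direct packaging of the two items above. The only point requiring a moment's thought is why testing against permutation matrices is a complete check for polynomial identities among Laplace products, and that is exactly what the multilinearity argument of the paragraph before the lemma establishes.
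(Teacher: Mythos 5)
Your proof is correct and follows exactly the route the paper intends: the corollary is stated as an immediate consequence of the multilinearity reduction to permutation matrices together with the evaluation of $\{A|B\}$ at $P(\sigma^{-1})$. No gaps.
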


\begin{theorem}\label{th1}
For given $A$ and $B$ we have
\begin{displaymath}
\sum_{V\subseteq B}\{A|V\}=\sum_{U\supseteq A}\{U|B\}.
\end{displaymath}
\end{theorem}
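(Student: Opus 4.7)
The plan is to apply the Corollary directly. What must be shown is that the single equation
\[
\sum_{V\subseteq B}\{A|V\} - \sum_{U\supseteq A}\{U|B\} = 0
\]
is a valid relation among Laplace products with constant coefficients. By the Corollary, this reduces to verifying, for each fixed $\sigma\in\mathcal{S}_n$, that the signed count of pairs $(S_i,T_i)$ with $\sigma S_i=T_i$ balances between the two sums.

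First I would fix $\sigma$ and handle the left-hand sum: the pairs are $(A,V)$ with $V\subseteq B$, and the condition $\sigma A=V$ pins down $V$ uniquely as $\sigma A$. Such a $V$ lies in $B$ precisely when $\sigma A\subseteq B$, so the left contribution is $1$ in that case and $0$ otherwise. Next I would do the same for the right-hand sum: the pairs are $(U,B)$ with $U\supseteq A$, and $\sigma U=B$ forces $U=\sigma^{-1}B$; this $U$ contains $A$ iff $A\subseteq\sigma^{-1}B$, i.e.\ iff $\sigma A\subseteq B$. Thus the right contribution is also $1$ in exactly the same case and $0$ otherwise, and the two cancel.

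There is no real obstacle here; the only point requiring a little care is the bookkeeping about cardinalities. Since $\{A|V\}=0$ unless $|A|=|V|$, the sum over all $V\subseteq B$ is the same as the sum over equicardinal subsets, and similarly for $U$; this ensures that the terms which the Corollary actually sees are exactly the ones written. Once that is noted, the identity follows immediately from the case-by-case count above.
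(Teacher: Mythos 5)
Your proof is correct and is essentially the paper's own argument: the paper reduces to the case of a permutation matrix $X=P(\sigma^{-1})$ and observes that both sides equal $\sgn\sigma$ exactly when $\sigma A\subseteq B$, which is precisely the count you carry out via the Corollary. Your explicit identification of the unique surviving term on each side ($V=\sigma A$ and $U=\sigma^{-1}B$) just spells out what the paper leaves implicit.
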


\begin{proof}
As above, it will suffice to check this when $X$ is 
a permutation matrix, $X=P(\sigma^{-1})=(\delta_{\sigma i,j})$.
For this $X$ the right hand side is $\sgn\sigma$ if $\sigma A\subseteq B$
and otherwise is $0$. The same is true of the left hand side.
\end{proof}

In particular, we recover the Laplace expansion (\ref{eq1}) by setting
$A=\emptyset$ or by setting $B=\{1,\ldots,n\}$.

We will show later that, for generic $X$, all linear relations between Laplace
products  are consequences of those in Theorem~\ref{th1}.

\begin{corollary}\label{cor1}
For given $A$, $B$, and for $C\subseteq B$ we have
\begin{displaymath}
\sum_{C\subseteq V\subseteq B}\{A|V\}=
\sum_{\substack{U\supseteq A\\W\subseteq C}}
(-1)^{|W|}\{U|B-W\}.
\end{displaymath}
\end{corollary}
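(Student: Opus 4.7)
The plan is to derive the corollary from Theorem~\ref{th1} by a simple inclusion--exclusion on the left-hand side. The identity we want restricts the sum of Theorem~\ref{th1} over $V\subseteq B$ to those $V$ that additionally contain $C$, and the natural way to impose that containment condition is via the standard identity
\begin{displaymath}
[C\subseteq V]=\sum_{\substack{W\subseteq C\\ W\cap V=\emptyset}}(-1)^{|W|},
\end{displaymath}
which follows from the fact that $\sum_{W\subseteq C-V}(-1)^{|W|}$ is $1$ when $C-V=\emptyset$ and $0$ otherwise.

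First I would apply this identity with $f(V)=\{A|V\}$ to rewrite the left-hand side as
\begin{displaymath}
\sum_{C\subseteq V\subseteq B}\{A|V\}=\sum_{W\subseteq C}(-1)^{|W|}\sum_{V\subseteq B-W}\{A|V\},
\end{displaymath}
after interchanging the order of summation (noting that $V\subseteq B$ together with $W\cap V=\emptyset$ is equivalent to $V\subseteq B-W$ once $W\subseteq C\subseteq B$).

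Second, I would apply Theorem~\ref{th1} with $B$ replaced by $B-W$ to each inner sum, obtaining
\begin{displaymath}
\sum_{V\subseteq B-W}\{A|V\}=\sum_{U\supseteq A}\{U|B-W\},
\end{displaymath}
and then recombine to land on the double sum in the statement.

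I do not anticipate any real obstacle: the only point requiring a little care is checking that the inclusion--exclusion coefficient in the first step really is $[C\subseteq V]$, and that the substitution $V\subseteq B\wedge W\cap V=\emptyset\iff V\subseteq B-W$ is valid when $W\subseteq C\subseteq B$. Once these are in place, the identity drops out by a single application of Theorem~\ref{th1}.
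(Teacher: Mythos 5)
Your argument is correct and is essentially the paper's own proof run in the opposite direction: the paper starts from the right-hand side, applies Theorem~\ref{th1} with $B-W$ in place of $B$, swaps the order of summation, and invokes the same alternating-sum identity $\sum_{W\subseteq C-V}(-1)^{|W|}=[C\subseteq V]$. Both routes use exactly the same two ingredients, so there is nothing to add.
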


\begin{proof}
By Theorem~\ref{th1}, the right hand side is
\begin{displaymath}
\sum_{W\subseteq C}(-1)^{|W|}\sum_{V\subseteq B-W}\{A|V\}=
\sum_{V\subseteq B}\thickspace\sum_{W\subseteq C-V}(-1)^{|W|}\{A|V\}
\end{displaymath}
which is equal to the left hand side since the inner sum is $0$ unless
$C\subseteq V$.
\end{proof}

Recall that $\widetilde A$ here denotes the complement of $A$.

\begin{corollary}\label{cor2}
For given $A$, $B$,
\begin{displaymath}
\sum_{\substack{U\supseteq A\\W\supseteq B}}
(-1)^{|\widetilde W|}\{U|W\}=
\sum_{V\subseteq B}\{A|\widetilde V\}.
\end{displaymath}
\end{corollary}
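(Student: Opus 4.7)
The plan is to deduce Corollary~\ref{cor2} directly from Theorem~\ref{th1} by a sum-swap followed by an inclusion--exclusion collapse. I would start with the left-hand side and first apply Theorem~\ref{th1} to the inner sum over $U \supseteq A$, which replaces $\sum_{U \supseteq A}\{U|W\}$ by $\sum_{V \subseteq W}\{A|V\}$ for each fixed $W$. This turns the left side into
\begin{displaymath}
\sum_{W \supseteq B}(-1)^{|\widetilde W|}\sum_{V \subseteq W}\{A|V\}.
\end{displaymath}

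Next I would swap the order of summation and view the result as a sum over $V$, with $W$ now ranging over sets containing $B \cup V$. Writing $W' = \widetilde W$, the condition $W \supseteq B \cup V$ becomes $W' \subseteq \widetilde{B \cup V}$, and the alternating-sign inner sum becomes
\begin{displaymath}
\sum_{W' \subseteq \widetilde{B\cup V}}(-1)^{|W'|},
\end{displaymath}
which vanishes unless $\widetilde{B \cup V} = \emptyset$, i.e.\ unless $B \cup V = \{1,\ldots,n\}$, equivalently $V \supseteq \widetilde B$. When it does not vanish, its value is $1$.

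Thus the left-hand side collapses to $\sum_{V \supseteq \widetilde B}\{A|V\}$. Reindexing by $V' = \widetilde V$ (so that $V \supseteq \widetilde B$ becomes $V' \subseteq B$ and $\{A|V\} = \{A|\widetilde{V'}\}$) recovers exactly the right-hand side. The only place where care is needed is the bookkeeping in the inclusion--exclusion step: one must check that the condition $V \subseteq W$ combined with $W \supseteq B$ is equivalent to $W \supseteq B \cup V$, and that the complement substitution carries the summation range correctly. This is the main (and really the only) potential source of confusion, but it is routine once written out.
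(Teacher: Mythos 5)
Your proof is correct. It takes a mildly different route from the paper: the paper obtains Corollary~\ref{cor2} as a formal consequence of Corollary~\ref{cor1}, by specializing that corollary to $B=\{1,\ldots,n\}$ and then substituting complements ($C\mapsto\widetilde B$, $V\mapsto\widetilde V$, $W\mapsto\widetilde W$), whereas you bypass Corollary~\ref{cor1} entirely and work straight from Theorem~\ref{th1}. The mechanism you use --- apply Theorem~\ref{th1} to collapse the sum over $U\supseteq A$, interchange the order of summation, and kill the alternating inner sum $\sum_{W'\subseteq \widetilde{B\cup V}}(-1)^{|W'|}$ unless $B\cup V=\{1,\ldots,n\}$ --- is exactly the mechanism the paper uses inside the proof of Corollary~\ref{cor1} (where the inner sum $\sum_{W\subseteq C-V}(-1)^{|W|}$ vanishes unless $C\subseteq V$). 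So in effect you have inlined the proof of Corollary~\ref{cor1} and composed it with the complementation step, producing a self-contained direct derivation. What your version buys is independence from Corollary~\ref{cor1}; what the paper's version buys is brevity, since the inclusion--exclusion work has already been done once. All the bookkeeping you flag as delicate checks out: $\{V\subseteq W\}\wedge\{W\supseteq B\}$ is indeed equivalent to $W\supseteq B\cup V$, the vanishing condition $\widetilde{B\cup V}=\emptyset$ is equivalent to $V\supseteq\widetilde B$, and the final reindexing $V=\widetilde{V'}$ converts $\sum_{V\supseteq\widetilde B}\{A|V\}$ into $\sum_{V'\subseteq B}\{A|\widetilde{V'}\}$ as required. (Note also that the convention $\{U|W\}=0$ for $|U|\neq|W|$ is what lets all sums range freely over subsets without size restrictions.)
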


\begin{proof}
Set $B=\{1,\ldots,n\}$ in Corollary~\ref{cor1}, getting
\begin{displaymath}
\sum_{V\supseteq C}\{A|V\}=
\sum_{\substack{U\supseteq A\\W\subseteq C}}
(-1)^{|W|}\{U|\widetilde W\}.
\end{displaymath}
Replace $C$, $V$, $W$ by $\widetilde B$, $\widetilde V$, $\widetilde W$
where $B$ is now the $B$ given in Corollary~\ref{cor2}.
\end{proof}

\section{Straightening Laplace products}
If $S=\{s_1<\cdots<s_p\}$ and $T=\{t_1<\cdots<t_q\}$ are subsets of
$\{1,\cdots,n\}$, we define a partial ordering $S\leq T$ as in \cite{DEPh}
to mean $p\geq q$ and $s_\nu\leq t_\nu$ for all $\nu\leq q$. Equivalently
$S\leq T$ if and only if $|S\cap\{1,\cdots,r\}|\geq|T\cap\{1,\cdots,r\}|$
for all $1\leq r\leq n$. Note that $S\supseteq T$ implies $S\leq T$
and $S\supset T$ implies $S < T$.

As above let $\widetilde S$ be the complement $\{1,\ldots,n\}-S$. For want
of a better terminology, I will say that  $S$ is good if $S\leq\widetilde{S}$
and that $S$ is bad otherwise.

The following theorem is our straightening law for Laplace products.

\begin{theorem}\label{th2}
For any $\{A|B\}$ we have $\{A|B\}=\sum\pm\{A_i|B_i\}$ where $A_i\leq A$,
$B_i\leq B$ and $A_i$ and $B_i$ are good.
\end{theorem}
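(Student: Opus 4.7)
The plan is to proceed by strong induction on pairs $(A,B)$ with respect to a well-founded refinement of the coordinate-wise partial order inherited from $\leq$. A natural choice is the lexicographic order on the tuple $(n - |A|, a_1, a_2, \ldots, n - |B|, b_1, b_2, \ldots)$, in which larger cardinalities and smaller elements both count as ``smaller''; good-good pairs are minimal in this order, providing the trivial base case.

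For the inductive step, suppose $A$ is bad (the symmetric case in which $A$ is good but $B$ is bad is handled by the column analogue of the identity below). Let $\nu$ be the least index with $a_\nu > \widetilde a_\nu$ and set $k = \widetilde a_\nu$, so that $k \notin A$ and $k < a_\nu$. Applying Theorem~\ref{th1} to the transposed matrix, with the theorem's ``$A$'' replaced by $A \cup \{k\}$ and its ``$B$'' replaced by $B$, and discarding cardinality-mismatched (hence zero) terms, yields the Pl\"ucker-like identity
\begin{displaymath}
\{A|B\} + \sum_{a \in A}\{(A-\{a\}) \cup \{k\}|B\} = \sum_{b \notin B}\{A \cup \{k\}|B \cup \{b\}\}.
\end{displaymath}
Solving for $\{A|B\}$ expresses it as a signed sum of Laplace products. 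The terms $\{A \cup \{k\}|B \cup \{b\}\}$ on the right have strictly larger cardinality in both arguments, with $A \cup \{k\} \supsetneq A$ and $B \cup \{b\} \supsetneq B$, so they are strictly smaller in our induction order; likewise, for $a > k$ the set $(A-\{a\}) \cup \{k\}$ satisfies $(A-\{a\}) \cup \{k\} \leq A$ with strict decrease in lex order, and hence is also smaller.

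The hard part will be handling the terms $\{(A-\{a\}) \cup \{k\}|B\}$ with $a < k$, which can occur when $\nu > 1$: replacing a smaller $a$ by the larger $k$ enlarges an early entry of $A$, so the new row-set need not satisfy $\leq A$. I expect the resolution to lie in tuning the choice of $k$ and the induction measure together --- for example, by taking $k$ to be the minimum element of $\widetilde A$ and first reducing the general case $\nu > 1$ to $\nu = 1$ by straightening on the ``good prefix'' $\{a_1, \ldots, a_{\nu - 1}\}$, or equivalently by refining the induction measure with a Young-diagram-type shape that strictly decreases even under the problematic substitutions. Once this is arranged, applying the induction hypothesis to each term on the right of the Pl\"ucker-like identity produces the desired good-good expansion of $\{A|B\}$, with the required inequalities $A_i \leq A$ and $B_i \leq B$ following by transitivity.
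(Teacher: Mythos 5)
Your Pl\"ucker-type identity is correct --- it is the transposed form of Theorem~\ref{th1} applied to $A\cup\{k\}$ and $B$ with the cardinality-mismatched terms discarded --- and your overall strategy (induct, and use such relations to trade a bad $\{A|B\}$ for smaller terms) is the same as the paper's. But the argument has two genuine gaps. First, the index $\nu$ you use to define $k$ need not exist: a set $A$ can be bad solely because $|A|<|\widetilde{A}|$ while $a_\mu\le\widetilde{a}_\mu$ for every $\mu$ (e.g.\ $A=\{1\}$ in $\{1,2,3,4\}$), so your construction does not even start when $|A|=|B|<n/2$. The paper handles that case separately via Corollary~\ref{cor2}, which writes $\pm\{A|B\}$ as a sum of $\{U|W\}$ with $U\supset A$ and $W\supset B$, hence $U<A$ and $W<B$.

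Second, and more seriously, the terms $\{(A-\{a\})\cup\{k\}|B\}$ with $a<k$ are not a bookkeeping nuisance that a cleverer induction measure can absorb: they genuinely violate the inequality $A_i\le A$ that the theorem asserts. For $n=4$ and $A=\{1,4\}$ one gets $\nu=2$, $k=3$, and the term with $a=1$ involves $\{3,4\}\nleq\{1,4\}$; recursively straightening $\{3,4\}$ produces sets bounded by $\{3,4\}$, not by $\{1,4\}$, so unless these contributions cancel the stated conclusion fails. The fix must change the identity, not the measure, and this is exactly Hodge's trick as used in the paper: adjoin the \emph{entire} initial segment $j_1<\cdots<j_\nu$ of the complement rather than the single element $k=j_\nu$, i.e.\ apply Corollary~\ref{cor1} with $D=B\cup\{j_1,\dots,j_\nu\}$ and $C=\{j_1,\dots,j_\nu,i_\nu,\dots,i_p\}$ (stated there for columns; transpose for your bad $A$). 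The alternating sum over $W\subseteq C$ then kills all single-element swaps of the problematic kind: every surviving set $D-W\neq B$ is obtained from $B$ by deleting some of $i_\nu,\dots,i_p$ and inserting at least as many of the smaller elements $j_1,\dots,j_\nu$, an exchange that never decreases $|(D-W)\cap\{1,\dots,r\}|$ and hence gives $D-W<B$. Your single-element version cannot reproduce this, so the proof is incomplete as it stands.
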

Note that some of the $(A_i,B_i)$ may be equal.

\begin{proof}
By induction on the set of pairs of subsets of $\{1,\cdots,n\}$
partially ordered by $(A,B)\leq(C,D)$ if $A\leq C$ and $B\leq D$, it is
sufficient to prove that if $A$ is bad, then $\{A|B\}=\sum\pm \{A_i|B_i\}$
with $A_i<A$ and $B_i\leq B$, and, similarly, if $B$ is bad then
$\{A|B\}=\sum\pm \{A_i|B_i\}$ with $A_i\leq A$ and $B_i<B$.
Each statement implies the other by transposing our matrix.

Suppose first that $|A|=|B|<n/2$. Note both $A$ and $B$ are bad in this
case. By Corollary~\ref{cor2} we have
\begin{displaymath}
\sum_{\substack{U\supseteq A\\W\supseteq B}}
\pm\{U|W\}=0.
\end{displaymath}
since the other side in Corollary~\ref{cor2} is $0$ because $|A|<n/2<|\widetilde{V}|$. One term of
this sum is $\pm\{A|B\}$ while all other nonzero terms have the form $\pm\{U|W\}$
where $U<A$ and $W<B$.

In the remaining case $|A|=|B|\geq n/2$ we use an argument similar to
that of Hodge \cite{H}. Suppose $B$ is bad. Let $B=\{i_1<\cdots<i_p\}$
and let $\widetilde B=\{j_1<\cdots<j_q\}$. Since $B$ is bad and $q\leq p$,
we have $i_\nu>j_\nu$ for some $\nu$ which we choose minimal. Let
$D=B\cup\{j_1,\ldots,j_\nu\}$ and let
$C=\{j_1<\cdots<j_\nu<i_\nu<\cdots<i_p\}$. Apply Corollary~\ref{cor1} with $D$
in place of $B$. The left hand side is $0$ since $|A|=p<|C|=p+1$ so we get
\begin{displaymath}
\sum_{\substack{U\supseteq A\\W\subseteq C}}
(-1)^{|W|}\{U|D-W\}=0.
\end{displaymath}
The term with $U=A$ and $W=\{j_1,\ldots,j_\nu\}$ is $\pm\{A|B\}$. This is the
only term of the form $\pm\{U|B\}$ since $|U|=|B|$ and $U\supseteq A$. In the
remaining terms we have $U\leq A$ since $U\supseteq A$. In these terms
$D-W\neq B$ so if $W\subseteq \{j_1<\cdots<j_\nu\}$ then
$D-W\supset B$ and therefore $D-W<B$.
In any case we have $|D-W| = |U| \geq |A| = |B| $.
 If $W$ contains some $i_\mu$ then
$D-W$ is obtained from $B$ by removing some (and at least one) of
the elements $\{i_\nu<\cdots<i_p\}$ and replacing them by at least as many
of the smaller elements $\{j_1<\cdots<j_\nu\}$. This operation does not
decrease the size of the sets $(D-W)\cap\{1,\cdots,r\}$ for $r\geq 1$
so $|(D-W)\cap\{1,\cdots,r\}|\geq|B\cap\{1,\cdots,r\}|$.  Therefore $D-W<B$.
\end{proof}

\section{The straightening law for minors}
We now use a simple trick to generalize Theorem~\ref{th2} to the case of
products of any two minors of a rectangular matrix $X=(x_{ij})$ where
$1\le i\le m$ and $1\le j\le n$. Recall that $(S|T)$ is the
minor of $X$ with row indices in $S$ and column indices in $T$. We
set $(S|T)=0$ if $|S|\neq|T|$. As above we write
$(S',S'')\leq (T',T'')$ if $S'\leq T'$ and $S''\leq T''$.

The following theorem is the straightening law for minors.

\begin{theorem}\label{th3}
If $(S',T')\nleq(S'',T'')$ then
\begin{displaymath}
(S'|T')(S''|T'')=\sum\pm(S'_i|T'_i)(S''_i|T''_i)
\end{displaymath}
where $(S'_i,T'_i)<(S',T')$ and $(S'_i,T'_i)\leq(S''_i,T''_i)$.
\end{theorem}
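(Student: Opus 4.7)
The plan is to deduce Theorem~\ref{th3} from Theorem~\ref{th2} by fabricating a square matrix $Y$ in which the product $(S'|T')(S''|T'')$ appears as a single Laplace product. First I would set $p = |S'| = |T'|$, $q = |S''| = |T''|$, and $N = p+q$; after passing to a formally enlarged matrix if necessary, I may assume $S' \cap S'' = \emptyset$ and $T' \cap T'' = \emptyset$. Let $u_1 < \cdots < u_N$ enumerate $S' \cup S''$ and $v_1 < \cdots < v_N$ enumerate $T' \cup T''$ in increasing order, and define the $N \times N$ matrix $Y$ by $Y_{ij} = X_{u_i, v_j}$.

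Next I would take $A = \{i : u_i \in S'\}$ and $B = \{j : v_j \in T'\}$ as subsets of $\{1,\ldots,N\}$. By construction the minor $Y(A|B)$ equals $X(S'|T')$ and $Y(\widetilde A|\widetilde B)$ equals $X(S''|T'')$, so $Y\{A|B\} = \pm\, X(S'|T')\,X(S''|T'')$. The key observation is that the bijections $i \mapsto u_i$ and $j \mapsto v_j$ are order-preserving, while the partial order $\leq$ and the notion of being good depend only on the relative order of the elements; hence both transport faithfully between $\{1,\ldots,N\}$ and the relevant subsets of $\{1,\ldots,m\}$ and $\{1,\ldots,n\}$. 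In particular $A$ is good in $Y$ iff $S' \leq S''$, $B$ is good iff $T' \leq T''$, and the hypothesis $(S',T') \nleq (S'',T'')$ of Theorem~\ref{th3} becomes exactly the statement that $A$ or $B$ is bad in $Y$.

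I would then apply Theorem~\ref{th2} to get $Y\{A|B\} = \sum \pm\, Y\{A_i|B_i\}$ with $A_i \leq A$, $B_i \leq B$, and both $A_i, B_i$ good. The induction in Theorem~\ref{th2}'s proof strictly reduces whichever of $A, B$ was bad, so under our hypothesis every surviving term satisfies $(A_i, B_i) < (A, B)$. Translating each $Y\{A_i|B_i\}$ back through the bijections yields $\pm\, X(S'_i|T'_i)\,X(S''_i|T''_i)$, where $(A_i, B_i) < (A, B)$ becomes $(S'_i, T'_i) < (S', T')$ and the goodness of $A_i, B_i$ becomes $(S'_i, T'_i) \leq (S''_i, T''_i)$---precisely the conclusion of Theorem~\ref{th3}.

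The main obstacle is the bookkeeping: verifying that $\leq$ and goodness transport correctly under the order-preserving bijection (the substantive point being that both depend only on relative order), and handling the possibly-overlapping case by formally duplicating shared rows and columns of $X$. Once those routine points are dispatched, Theorem~\ref{th3} reduces to a direct translation of Theorem~\ref{th2} across a change of ambient index set.
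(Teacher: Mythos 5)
Your overall strategy is the paper's own: build a square matrix out of order-preserving indexings of $S'\cup S''$ and $T'\cup T''$, apply Theorem~\ref{th2} there, and translate back. In the case $S'\cap S''=\emptyset$ and $T'\cap T''=\emptyset$ your argument is complete, since the indexings are genuine order isomorphisms and $\leq$, $<$, and goodness all transport faithfully in both directions. The gap is in the overlapping case, which you dismiss as routine bookkeeping about ``formally duplicating shared rows and columns.''

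Two things are missing there. First, for each shared index you must decide which of its two copies is assigned to $S'$ and which to $S''$, and how the two copies are ordered relative to one another; the paper's Lemma~\ref{lem2}(b) makes the specific choice that the copy belonging to $S'$ precedes the copy belonging to $S''$. Second --- and this is the substantive point --- the collapse map from the enlarged index set back to $\{1,\dots,m\}$ is order-preserving but not injective. It preserves $\leq$ on sets on which it happens to be injective, but it does \emph{not} automatically preserve strict inequality: one can have $A_i<A$ upstairs with equal images downstairs, which would destroy the required conclusion $(S'_i,T'_i)<(S',T')$. Ruling this out is exactly the content of the paper's Lemma~\ref{lem3}, and its proof uses precisely the ordering convention above; with the opposite convention the implication is actually false, since replacing an element of $S'$ by an earlier-placed duplicate yields a strictly smaller set upstairs with the same image downstairs. (You would also need to discard the terms whose minors use both copies of a duplicated row or column; these vanish, so this is the easy part.) None of this is unfixable, but the transfer of strictness through the non-injective collapse is the one genuine idea in the reduction, and your proposal does not supply it.
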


We need two lemmas for the proof. By an order preserving map I mean one 
satisfying $f(a) \leq f(b) $ if $a \leq b$.

\begin{lemma}\label{lem2}
Let $U'$ and $U''$ be finite subsets of a totally ordered set and let $k = |U'| + |U''|$. Then
there is an order preserving map $f:K = \{1,\dots,k\} \to U'\cup U''$
and disjoint subsets $K'$ and $K''$ of $K$ with $K = K' \sqcup K''$ such that 
\begin{description}
\item[(a)]{$f$ maps $K'$ isomorphically onto $U'$ and  $K''$ isomorphically onto $U''$}
\item[(b)]{If $a,b\in K$, $f(a) = f(b)$, and $a < b$, then $a \in K'$ and $b \in K''$.}
\end{description}
\end{lemma}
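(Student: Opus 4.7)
The plan is to construct $f$ by a standard merge of the two sorted lists $U'$ and $U''$, with a deliberate tie-breaking rule that places elements of $U'$ before equal elements of $U''$. This tie-breaking rule is precisely what makes condition (b) work.

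In detail, enumerate $U' = \{u'_1 < \cdots < u'_p\}$ and $U'' = \{u''_1 < \cdots < u''_q\}$, where $p+q=k$. I would define the sequence $f(1),f(2),\ldots,f(k)$ recursively: at each stage we have consumed a prefix of each list, and we append $u'_i$ to the sequence (placing the current index in $K'$) if the next unused element of $U'$ exists and is less than or equal to the next unused element of $U''$; otherwise we append $u''_j$ (placing the current index in $K''$). When one list is exhausted, we just append the rest of the other. This produces $f:K\to U'\cup U''$, the partition $K = K'\sqcup K''$, and condition (a) holds tautologically: $f$ restricted to $K'$ lists $U'$ in increasing order, and similarly for $K''$.

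For order-preservation of $f$ itself and for condition (b), the key observation is the tie-breaking rule: whenever the next candidates from $U'$ and $U''$ are equal, we consume the one from $U'$ first. Therefore, along the sequence $f(1),f(2),\ldots$, values are weakly increasing; they can strictly fail to increase only when $f(a)=f(b)$ for some $a<b$, and then by the tie-breaking rule the step at $a$ must have consumed the element of $U'$ and the step at $b$ (being later, when $U'$ no longer has that value available) must have consumed the equal element of $U''$. This gives $a\in K'$ and $b\in K''$, which is (b). It also shows $f$ is order-preserving (weakly monotone), while on each of $K'$ and $K''$ the restriction of $f$ is strictly increasing by construction.

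I do not foresee a genuine obstacle; the only thing to be careful about is writing the tie-breaking rule explicitly, since it is precisely what prevents the case $a\in K''$, $b\in K'$ in (b). Once the merge is defined with that convention, conditions (a) and (b) are essentially immediate from the construction.
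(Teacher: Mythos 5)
Your proof is correct and is essentially the paper's own argument in algorithmic clothing: the paper defines the same order on the disjoint union $L'\sqcup L''$ abstractly (by $a<b$ iff $f(a)<f(b)$, or $f(a)=f(b)$ with $a\in L'$, $b\in L''$) and then identifies it with $K$, which produces exactly your merge with ties broken in favor of $U'$. No gap; the tie-breaking convention you highlight is precisely the point of the paper's ordering as well.
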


\begin{proof}
Let $L'$ and $L''$ be disjoint sets in $1-1$ correspondence with
$U'$ and $U''$. Let $L = L'\sqcup L''$ and let
$f:L\rightarrow U'\cup U''$ map $L'$ bijectively onto
$U'$ and $L''$ bijectively onto $U''$.
Define an ordering on $L$ by setting $a < b$ if $f(a) < f(b)$
or if $f(a)  =  f(b)$ with $a\in L'$ and $b\in L''$.
It is easy to check that this defines a total ordering preserved by $f$.
Since $L$ is a totally ordered finite set of order $k$ it is isomorphic to
$K = \{1,\dots,k\}$ and we can substitute  $K$ for $L$ letting $K'$ and $K''$
correspond to $L'$ and $L''$. (a) is clear and (b) follows from the definition of the ordering/
\end{proof}

If $f$ is order preserving and is injective on $P$ and $Q$ then
$P \leq Q$ implies $f(P) \leq f(Q)$ since the  injectivity guarantees that
$|f(P)| = |P|$ and similarly for $Q$.

\begin{lemma}\label{lem3}
In the situation of Lemma ~\ref{lem2} if $P$ is a subset of $K$ on which $f$
is injective then $P <  K'$ implies that $f(P) < f(K') = U'$.
\end{lemma}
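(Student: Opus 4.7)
The plan is to reduce to showing $f(P) \neq U'$, since the remark immediately preceding the lemma already yields $f(P) \leq f(K') = U'$ from $P \leq K'$ (using that $f$ is injective on both $P$ and on $K'$). So the content is really the strictness, and I would split on cardinality.

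If $|P| > |K'|$ the inequality is automatic: $|f(P)| = |P| > |K'| = |U'|$ by injectivity of $f$ on $P$, so $f(P)$ cannot equal $U'$. The interesting case is $|P| = |K'|$, in which $P \leq K'$ together with $P \neq K'$ gives, writing $P = \{p_1 < \cdots < p_k\}$ and $K' = \{a_1 < \cdots < a_k\}$, that $p_\nu \leq a_\nu$ for every $\nu$ and $p_\mu < a_\mu$ for at least one index $\mu$.

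Here is where property (b) of Lemma~\ref{lem2} does the work. Suppose, toward a contradiction, that $f(P) = U'$. Since $f$ is order preserving and injective on $P$, and also injective on $K'$, enumerating $U' = \{u_1 < \cdots < u_k\}$ forces $f(p_\nu) = u_\nu = f(a_\nu)$ for each $\nu$. Specialising to the index $\mu$ where $p_\mu < a_\mu$, the two elements $p_\mu$ and $a_\mu$ are distinct points of $K$ with the same image under $f$, so (b) requires the smaller one, $p_\mu$, to lie in $K'$ and the larger one, $a_\mu$, to lie in $K''$. But $a_\mu \in K'$ by definition, contradicting $K' \cap K'' = \emptyset$. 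Hence $f(P) \neq U'$, and combined with $f(P) \leq U'$ this gives $f(P) < U'$.

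I expect the only real obstacle to be recognising that property (b) is exactly what upgrades the weak inequality $f(P) \leq U'$ to a strict one in the equal-cardinality case; once that link is spotted, the argument is essentially a one-line contradiction. No separate treatment of $P = \emptyset$ or $K' = \emptyset$ should be needed since $P < K'$ already forces $|P| \geq |K'|$ with $P \neq K'$.
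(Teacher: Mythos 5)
Your proof is correct and follows essentially the same route as the paper: assume $f(P)=U'$, reduce to the equal-cardinality case by injectivity, match up the ordered enumerations to get $f(p_\nu)=f(a_\nu)$ with $p_\mu<a_\mu$ for some $\mu$, and invoke Lemma~\ref{lem2}(b) to force $a_\mu\in K''$, contradicting $a_\mu\in K'$. The only cosmetic difference is that you split on $|P|$ versus $|K'|$ up front, whereas the paper derives $|P|=|K'|$ from the supposed equality $f(P)=f(K')$; both handle that point equally well.
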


\begin{proof}
It is clear that $f(P)\leq f(K')$. We must show that $f(P)\neq f(K')$.
Suppose $f(P) = f(K')$. Injectivity shows that $|P| = |f(P)| = |f(K')| = |K'|$.
Let $P  = \{u_1<\dots< u_p\}$ and $K' = \{v_1<\dots< v_p\}$. Since $f$
is order preserving and injective on $P$ and on $K$ we have 
$f(P)  = \{f(u_1)<\dots<f(u_p)\}$ and $f(K') = \{f(v_1)<\dots<f( v_p)\}$
Now $u_\nu \leq v_\nu$ for all $\nu$ since $P < K'$ but
$f(u_\nu) =  f(v_\nu)$  for all $\nu$ since
$f(P) = f(K')$. Since $P < K'$, $u_\nu = v_\nu$ can't hold for all $\nu$
otherwise $P$ and $K'$ would be equal, so for some $\nu$ we have
$u_\nu <  v_\nu$. But since $f(u_\nu) =  f(v_\nu)$,  Lemma~\ref{lem2}(b)
shows that $v_\nu$ must lie in $K''$ which is a contradiction.
\end{proof}

\begin{proof}[Proof of Theorem~\ref{th3}]
%We can clearly restrict attention to the minor with rows indexed by $S'\cup S''$
%and columns indexed by $T'\cup T''$. Therefore we can assume that
%$S'\cup S'' = \{1,\dots,m\}$ and $T'\cup T'' = \{1,\dots,n\}$.
We can assume that $|S'| = |T'|$ and $|S''| = |T''|$
since otherwise the left hand side is $0$.
Let $k = |S'| + |S''| = |T'| + |T''|$ and apply Lemma~\ref{lem2} to
$U' = S'$ and $U'' = S''$ 
getting an order preserving map
$\varphi:I\rightarrow S'\cup S''$ with
$I = I'\sqcup I''$(disjoint union), $I'$ mapping isomorphically
to $S'$, and $I''$ mapping isomorphically to $S''$. Similarly define
$\psi:J\rightarrow T'\cup T''$ with $J =J'\sqcup J''$.
Note that $I = J = \{1,\dots,k\}$.
We call them $I$ and $J$ to distinguish their use as row and column indices.

Define a $k\times k$ matrix $Y$ indexed by $I$ and $J$ by
setting $y_{ij}=x_{\varphi(i)\psi(j)}$. Then, for $P\subseteq I$
and $Q\subseteq J$, we have $Y(P|Q)=X(\varphi(P)|\psi(Q))$ if
$\varphi|P$ and $\psi|Q$ are injective while $Y(P|Q)=0$ otherwise
since two rows or columns will be equal.

By Theorem~\ref{th2} we have $Y\{I'|J'\}=\sum\pm Y\{I'_i|J'_i\}$ which we can write as
\begin{equation}\label{eq3}
Y(I'|J')Y(I''|J'')=\sum\pm Y(I'_i|J'_i)Y(	I''_i|J''_i)
\end{equation}
where $I''_i=I-I'_i=\widetilde I'_i$ and $J''_i=J-J'_i=\widetilde J'_i$.
By Theorem~\ref{th2} we see that $I'_i\leq I'$, $J'_i\leq J'$
and that $I'_i$ and $J'_i$ are good
so that $I'_i\leq I''_i$ and $J'_i\leq J''_i$.
By omitting all $0$ terms in (\ref{eq3}) we can insure that $\varphi$ is injective on
all $I'_i$ and all $I''_i$ and that $\psi$ is injective on all $J'_i$ and  $J''_i$.

Let $S'_i=\varphi(I'_i)$, $S''_i=\varphi(I''_i)$ and similarly
for $T$. Because of the injectivity we can write (3) (with the $0$ terms removed) as
\begin{equation}\label{eq4}
(S'|T')(S''|T'')=\sum\pm(S'_i|T'_i)(S''_i|T''_i)
\end{equation}
where $(S'_i,T'_i)\leq(S',T')$ and $(S'_i,T'_i)\leq(S''_i,T''_i)$.

If $I'$ and $J'$ are good then $I'\leq I''$ and $J'\leq J''$ which implies that
$S'\leq S''$ and $T'\leq T''$ contrary to the hypothesis.
Therefore one of $I'$ and $J'$,
say $I'$, must be bad. Since $I'$ is bad, $I'_i$ is good, and
$I'_i\leq I'$, we have $I'_i<I'$.
By Lemma~\ref{lem3} it follows that $S'_i < S'$
showing that $(S'_i,T'_i)<(S',T')$. The same argument applies if $J'$ is bad.
\end{proof}

\begin{remark}\label{rem1}
Suppose $A = A'\sqcup A''$ where $A'$ and $A''$ are disjoint
and $\varphi:A \to B$ is onto. Suppose $\varphi$ is
injective on $A'$ and on $A''$. Let $B' = \varphi(A')$ and $B'' = \varphi(A'')$.
Then $B'\cap B'' = \{x|\,|\varphi^{-1}(x)| = 2\}$ is independent of $A'$ and $A''$.
We can think of $B = \varphi(A)$ as a set with multiplicities where the multiplicity of
a point $x$ is the order of $\varphi^{-1}(x)$. If we think of $B'$ and $B''$ as
sets with all points of multiplicity $1$ then $B'\cup B'' = B$ as sets
with multiplicities. Applying this remark to the maps $\varphi:I\rightarrow S'\cup S''$
and $\psi:J\rightarrow T'\cup T''$ defined in the proof of Theorem~\ref{th3}
we see that the sets given by our proof of Theorem~\ref{th3} satisfy
$S'_i\cup S''_i = S'\cup S''$ and $T'_i\cup T''_i = T'\cup T''$ as multisets.
\end{remark}

\section{Standard monomials}
We say that a product $(A_1|B_1)\cdots(A_r|B_r)$ of the minors of
a matrix $X$ is a standard  monomial if $A_1\leq A_2\leq\cdots\leq A_r$ and
$B_1\leq B_2\leq\cdots\leq B_r$. We regard two standard monomials
which only differ by factors of the form $(\emptyset|\emptyset)$
as identical.
The following is an easy consequence of Theorem~\ref{th3}.

\begin{corollary}
Any polynomial in the entries of $X$ is a linear combination
of standard monomials in the minors of $X$.
\end{corollary}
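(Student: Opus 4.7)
The plan is a straightening induction using Theorem~\ref{th3} as the reduction step. Since each entry $x_{ij}$ equals the $1\times 1$ minor $(\{i\}|\{j\})$, any polynomial in the entries is, by multilinearity, a linear combination of monomials $(A_1|B_1)\cdots(A_r|B_r)$ in minors of $X$, so it suffices to rewrite each such monomial as a linear combination of standard monomials.

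For a monomial $m=\prod_i(A_i|B_i)$, consider the multiset of index pairs $\{(A_i,B_i)\}$. If this multiset is totally ordered under $\le$, then sorting the (commuting) factors in $\le$-increasing order exhibits $m$ as a standard monomial, and we are done. Otherwise two pairs are incomparable; after relabeling we may assume $(A_a,B_a)\nleq(A_b,B_b)$. Theorem~\ref{th3} applied to the corresponding product of two minors gives
\[
(A_a|B_a)(A_b|B_b)\;=\;\sum_k\pm(A'_k|B'_k)(A''_k|B''_k)
\]
with $(A'_k,B'_k)<(A_a,B_a)$ and $(A'_k,B'_k)\le(A''_k,B''_k)$. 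Multiplying through by the other factors of $m$ expresses $m$ as a signed sum of monomials $m_k$, to which I apply the inductive hypothesis.

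For termination, Remark~\ref{rem1}, applied to the maps $\varphi$ and $\psi$ from the proof of Theorem~\ref{th3}, shows that the two multisets $\bigsqcup_iA_i$ and $\bigsqcup_iB_i$ of row and column indices, counted with multiplicity, are preserved by each such straightening step. Since only finitely many monomials share a given content, any strict partial order that is decreased by each application of Theorem~\ref{th3} suffices for the induction. Concretely, one may fix a linear extension $\preceq$ of $\le$ on index pairs, sort the pair-sequence of each monomial in increasing $\preceq$-order, and compare two monomials lexicographically; the strict inequality $(A'_k,B'_k)<(A_a,B_a)$ is what produces the required strict decrease.

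The main potentially subtle step is confirming strict decrease in the chosen well-founded order after the substitution; everything else is a direct application of Theorem~\ref{th3} together with the finiteness observation of Remark~\ref{rem1}.
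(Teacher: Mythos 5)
Your reduction to monomials in minors and your use of Theorem~\ref{th3} as the rewriting step are the same as the paper's, but your termination argument has a genuine gap. Theorem~\ref{th3} controls only the new \emph{first} pair: each term satisfies $(A'_k,B'_k)<(A_a,B_a)$, but nothing in the theorem relates $(A'_k,B'_k)$ to $(A_b,B_b)$, and nothing bounds $(A''_k,B''_k)$ from above at all --- indeed $(A''_k,B''_k)$ is typically strictly larger than \emph{both} old pairs, as in the Pl\"ucker identity $p_{14}p_{23}=p_{13}p_{24}-p_{12}p_{34}$, where the second column sets $\{2,4\}$ and $\{3,4\}$ strictly exceed both $\{1,4\}$ and $\{2,3\}$. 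Because of this, the lexicographic comparison of $\preceq$-sorted pair sequences is not shown to decrease: at the first position where the old and new sorted sequences differ, the old sequence may present $(A_b,B_b)$ while the new one presents $(A'_k,B'_k)$, and since these two pairs need not be comparable in $\le$, a linear extension can place $(A_b,B_b)\prec(A'_k,B'_k)$, making the new monomial lexicographically \emph{larger}. (Reading your order from the largest entry instead fares no better, because of the uncontrolled $(A''_k,B''_k)$; the same lack of an upper bound also defeats the Dershowitz--Manna multiset order.) So the step you yourself flag as ``potentially subtle'' is exactly where the proof is missing: you would need the additional inequality $(A'_k,B'_k)\le(A_b,B_b)$, which Theorem~\ref{th3} does not assert and which you do not prove.

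The paper sidesteps the need for any global well-founded measure on monomials by nesting the inductions: induct on the number of factors $r$, and for fixed $r$ on the single pair $(A_1,B_1)$ in the finite poset of pairs. By the induction on $r$ one first arranges that $(A_2,B_2)\le\cdots\le(A_r,B_r)$; then either $(A_1,B_1)\le(A_2,B_2)$ and the monomial is standard, or Theorem~\ref{th3} applied to the first two factors produces terms whose \emph{first} pair is strictly smaller than $(A_1,B_1)$, after which the tails are re-standardized by the induction on $r$ and the induction on the first pair closes the argument. Only the one inequality that Theorem~\ref{th3} actually provides is ever used. To repair your version, either adopt this nested induction or supply a proof of the extra comparison between $(A'_k,B'_k)$ and $(A_b,B_b)$; the content-preservation observation from Remark~\ref{rem1} gives finiteness but cannot substitute for a measure that genuinely decreases.
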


\begin{proof}
Since $x_{ij}=(\{i\}|\{j\})$, it is clear that any such polynomial is a
linear combination of products of the minors of $X$.
We show that any product $(A_1|B_1)\cdots(A_r|B_r)$ with $r$ factors is a linear
combination of standard monomials with $r$ factors by induction on $r$ and on
$(A_1,B_1)$ in the finite partially ordered set of  pairs of subsets
of $\{1,\ldots,m\}$ and $\{1,\ldots,n\}$. By induction on $r$ we can
assume that $(A_2,B_2)\leq\cdots\leq(A_r,B_r)$. If
$(A_1,B_1)\leq(A_2,B_2)$ or $r=1$ we are done. If not, Theorem~\ref{th3} shows
that $(A_1|B_1)(A_2|B_2)=\sum\pm(C_i|D_i)(P_i|Q_i)$
where $(C_i,D_i)<(A_1,B_1)$ so we are done by induction on $(A_1,B_1)$.
\end{proof}

\begin{remark}
It follows from Remark~\ref{rem1} that if we write $(A_1|B_1)\cdots(A_r|B_r)$
as a linear combination of standard monomials
$(A^{(i)}_1|B^{(i)}_1)\cdots(A^{(i)}_{r_i}|B^{(i)}_{r_i})$ then
\newline $\bigcup_j A^{(i)}_j=\bigcup_j A_j$ and
$\bigcup_j B^{(i)}_j=\bigcup_j B_j$ for all $i$, counting
multiplicities. In other words the two sides have the same content in
the sense of \cite{DEPy}.
\end{remark}

To conclude, we give a proof of the following theorem which is rather
similar to the proof in \cite{DKR} but which uses no combinatorial
constructions. By a generic matrix we mean one whose entries are
distinct indeterminates.

\begin{theorem}\label{th4}
If $X$ is a generic matrix, the standard monomials in the minors of $X$
are linearly independent.
\end{theorem}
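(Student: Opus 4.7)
Plan: I would prove Theorem~\ref{th4} by a Hilbert series / dimension-counting argument. The polynomial ring $R=\mathbb{Z}[x_{ij}]$ carries the natural $(\mathbb{N}^m\times\mathbb{N}^n)$-grading in which the monomial $\prod x_{ij}^{e_{ij}}$ has row-content $\alpha_i=\sum_j e_{ij}$ and column-content $\beta_j=\sum_i e_{ij}$. The graded piece $R_{\alpha,\beta}$ is a free $\mathbb{Z}$-module with basis the monomials of these row and column sums, so $\mathrm{rk}\,R_{\alpha,\beta}=M(\alpha,\beta)$, the number of nonnegative integer matrices with marginals $(\alpha,\beta)$. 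Each standard monomial $(A_1|B_1)\cdots(A_r|B_r)$ is multi-homogeneous, lying in $R_{\alpha,\beta}$ with $\alpha_i=\#\{k:i\in A_k\}$ and $\beta_j=\#\{k:j\in B_k\}$, so the preceding corollary (that every polynomial is a linear combination of standard monomials) shows that the standard monomials of content $(\alpha,\beta)$ span $R_{\alpha,\beta}$.

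Linear independence therefore reduces to verifying that for every $(\alpha,\beta)$ the number $N(\alpha,\beta)$ of standard bitableaux of content $(\alpha,\beta)$ equals $M(\alpha,\beta)$. A standard bitableau of content $(\alpha,\beta)$ consists of a pair of tableaux of a common partition shape $\lambda$, each row-strict and column-weakly-increasing, with entry multisets $\alpha$ and $\beta$ respectively; transposing the Young diagram converts each into a semistandard Young tableau of shape $\lambda'$, giving
\[
N(\alpha,\beta)=\sum_\mu K_{\mu,\alpha}K_{\mu,\beta}
\]
in terms of Kostka numbers. Cauchy's identity $\sum_\mu s_\mu(x)s_\mu(y)=\prod_{i,j}(1-x_iy_j)^{-1}$, combined with the expansion $s_\mu=\sum_\nu K_{\mu,\nu}m_\nu$ of Schur functions in monomial symmetric functions, then yields on extracting the coefficient of $x^\alpha y^\beta$ that $\sum_\mu K_{\mu,\alpha}K_{\mu,\beta}=M(\alpha,\beta)$. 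Hence the spanning standard monomials form a $\mathbb{Z}$-basis of each $R_{\alpha,\beta}$, establishing linear independence.

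The main obstacle is supplying Cauchy's identity in a manner consistent with the paper's stated avoidance of combinatorial constructions like RSK. I would establish it algebraically---for instance via the Jacobi--Trudi determinantal formula for Schur functions combined with a direct expansion of $\prod(1-x_iy_j)^{-1}$---so that the counting identity $N(\alpha,\beta)=M(\alpha,\beta)$ emerges from generating-function manipulations rather than an explicit combinatorial bijection.
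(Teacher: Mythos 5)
Your argument is correct, but it takes a genuinely different route from the paper's. You reduce to each multigraded piece $R_{\alpha,\beta}$ of $\mathbb{Z}[x_{ij}]$, invoke the spanning corollary (noting, as you should make explicit, that minors and hence standard monomials are multihomogeneous, so spanning restricts to each piece), and then show that the number of standard monomials of content $(\alpha,\beta)$ equals the rank $M(\alpha,\beta)$ of that piece via $\sum_\mu K_{\mu\alpha}K_{\mu\beta}=M(\alpha,\beta)$, extracted from the Cauchy identity; a spanning set of a free $\mathbb{Z}$-module whose cardinality equals the rank is automatically a basis. This is sound and is essentially the classical enumerative argument of Doubilet--Rota--Stein and of the invariant-theoretic literature. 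The paper instead specializes $X=YZ$ with $Y$, $Z$ generic and inner size $N$ large, applies Binet--Cauchy, and chooses a monomial order on the new indeterminates under which the leading monomial of $(A_1|B_1)\cdots(A_r|B_r)$ is $y(A_1)\cdots y(A_r)\,z(B_1)\cdots z(B_r)$, from which the chains $A_1\le\cdots\le A_r$ and $B_1\le\cdots\le B_r$ can be reconstructed; distinct standard monomials therefore have distinct leading terms with coefficient $\pm1$, giving independence directly (and over any coefficient ring). What your approach buys is the exact count $N(\alpha,\beta)=M(\alpha,\beta)$, i.e.\ the knowledge that the standard monomials are not merely independent but a basis of each graded piece with no room to spare. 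What it costs is precisely the machinery the paper set out to avoid: besides the Cauchy identity itself, you need the expansion $s_\mu=\sum_\nu K_{\mu\nu}m_\nu$ with $K_{\mu\nu}$ interpreted as a count of semistandard tableaux, which is not a formal consequence of Jacobi--Trudi and requires its own combinatorial input (Lindstr\"om--Gessel--Viennot, Bender--Knuth, or equivalent). The paper's leading-monomial argument is self-contained, elementary, and only a page long, so while your plan is viable, it is substantially heavier than what it replaces.
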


Before giving the proof we review some results about ordering monomials.
Given a totally ordered set of indeterminates, we can order the monomials in
these indeterminates as follows. If $x$ is an indeterminate and $m$ is such
a monomial write $\ord_x m$ for the number of times $x$ occurs in $m$.
If $m_1$ and $m_2$ are monomials define $m_1 > m_2$ to mean
$\ord _x m_1 > \ord _x m_2$ for some $x$ while $\ord _y m_1 = \ord _y m_2$
for all $y > x$. It is easy to check that this defines a total ordering on the set of monomials.

\begin{lemma}\label{lem4}If  $u_1, u_2, \cdots ,u_k$ and $v_1, v_2, \cdots ,v_k$ are
monomials  with $u_i \leq v_i$ for all $i$ and $u_i < v_i$ for some $i$ then
$u_1u_2 \cdots u_k < v_1v_2 \cdots v_k$.
\end{lemma}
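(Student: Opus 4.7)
The plan is to reduce the lemma to a one-factor compatibility statement and then iterate. Specifically, I would first verify that the ordering is compatible with multiplication on the right: if $u \leq v$, then $uw \leq vw$ for every monomial $w$, with $<$ on the right whenever $u < v$ holds on the left. Granting this, a telescoping chain
\[
u_1 u_2 \cdots u_k \;\leq\; v_1 u_2 \cdots u_k \;\leq\; v_1 v_2 u_3 \cdots u_k \;\leq\; \cdots \;\leq\; v_1 v_2 \cdots v_k
\]
immediately gives the lemma: at the index $i$ where $u_i < v_i$ the corresponding step is strict, and since the order is total, a chain of the form $a \leq b < c \leq d$ forces $a < d$.

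For the compatibility claim, the essential observation is that $\ord_x$ is additive over products: $\ord_x(mm') = \ord_x m + \ord_x m'$ for every indeterminate $x$, simply because $\ord_x$ counts occurrences. If $u < v$, pick the witnessing indeterminate $x$ from the definition of the ordering, so that $\ord_x u < \ord_x v$ while $\ord_y u = \ord_y v$ for all $y > x$. Adding $\ord_x w$ to the strict inequality and $\ord_y w$ to each of the subsequent equalities yields $\ord_x(uw) < \ord_x(vw)$ and $\ord_y(uw) = \ord_y(vw)$ for all $y > x$, which is precisely what is needed to conclude $uw < vw$. The case $u = v$ is trivial since then $uw = vw$.

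There is really no hard step here: the lemma follows directly from the additivity of $\ord_x$ together with the fact, already noted in the paper, that the ordering on monomials is total. The only subtlety worth mentioning is the legitimate use of totality to chain mixed strict and non-strict inequalities in the telescope; this is standard, but it is the one place where the argument would break down if we only had a partial order.
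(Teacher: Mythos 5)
Your proof is correct and follows the same route as the paper: the paper's (one-sentence) proof also reduces to showing $a < b$ implies $ac < bc$ and then replaces the $u_i$'s by the $v_i$'s one at a time. You have simply filled in the details of the multiplication-compatibility step via additivity of $\ord_x$, which is exactly the intended argument.
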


It is sufficient to show $a < b$ implies $ac < bc$
and then replace the $u_i$'s by the $v_i$'s one by one.

It follows that if $f$ and $g$ are linear combinations of
monomials, the leading monomial of $fg$ is the product of the leading monomials of 
$f$ and $g$.

\begin{proof}[Proof of Theorem~\ref{th4}]
We specialize $X$ to a matrix of the form $X=YZ$ where $Y$ is a generic
$m\times N$ matrix, $Z$ is a generic $N\times n$ matrix and $N$ is
sufficiently large. By the classical Binet--Cauchy theorem we have
\begin{equation}\label{eq5}
X(A|B)=\sum_{S}Y(A|S)Z(S|B).
\end{equation}
This just expresses the functoriality of the exterior product:
\begin{equation}
\bigwedge(X)=\bigwedge(Y)\bigwedge(Z).
\end{equation}
By omitting $0$ terms in equation (\ref{eq5}) we can assume that $|A| = |B| = |S| = p$.
In the situation of equation (\ref{eq5}) let
$Y = (y^{(\nu)}_i)_{1\leq i \leq m,\, 1 \leq \nu \leq N}$ and
$Z = (z^{(\nu)}_j)_{1 \leq \nu \leq N\, 1 \leq j \leq n}$ be generic matrices
where the indeterminates $y^{(\nu)}_i$ and $z^{(\nu)}_j$ are all distinct.
Then $X = YZ$ has entries 
$x_{ij}=\sum_{\nu=1}^{N}y_i^{(\nu)}z_j^{(\nu)}$.
We order the indeterminates as follows:
\begin{equation}
y_1^{(1)}>\cdots>y_m^{(1)}>z_1^{(1)}>\cdots>z_n^{(1)}>y_1^{(2)}%
>\cdots>y_m^{(2)}>\cdots
\end{equation}
and order the monomials in these indeterminates as described above.

If $A=\{a_1<\cdots<a_p\}$ and $S=\{s_1<\cdots<s_p\}$, then
\begin{equation}
Y(A|S)=\sum_{\sigma\in\mathcal{S}_p}\pm y_{a_{\sigma 1}}^{(s_1)}\cdots y_{a_{\sigma p}}^{(s_p)}.
\end{equation}

The leading monomial of $Y(A|S)$ is $y_{a_1}^{(s_1)}\cdots y_{a_p}^{(s_p)}$
since $y_{a_1}^{(s_1)}\cdots y_{a_p}^{(s_p)} > 
y_{a_{\sigma1}}^{(s_1)}\cdots y_{a_{\sigma p}}^{(s_p)}$ for $\sigma \neq 1$.
To see this, let $i$ be least such that $\sigma i\neq i$. Then $\sigma i > i$ so
$y_{a_{\sigma i}}^{(s_i)} <  y_{a_i}^{(s_i)}$. Choose $x = y_{a_ i}^{(s_i)}$.
Then $\ord_xy_{a_1}^{(s_1)}\cdots y_{a_p}^{(s_p)} > \ord_x
y_{a_{\sigma1}}^{(s_1)}\cdots y_{a_{\sigma p}}^{(s_p)}$ while
$\ord_zy_{a_1}^{(s_1)}\cdots y_{a_p}^{(s_p)} = \ord_z
y_{a_{\sigma1}}^{(s_1)}\cdots y_{a_{\sigma p}}^{(s_p)}$ for $z > x$.

Similarly, if $B=\{b_1<\cdots<b_p\}$, then the leading monomial of $Z(S|B)$ is
$z_{b_1}^{(s_1)}\cdots z_{b_p}^{(s_p)}$. The various terms on the right hand side
of (\ref{eq5}) have leading monomials
$y_{a_1}^{(s_1)}\cdots y_{a_p}^{(s_p)}z_{b_1}^{(s_1)}\cdots z_{b_p}^{(s_p)}$.
Of these, the one with $s_i=i$ for all $i$ is the largest by
Lemma~\ref{lem4}. Therefore,
if $N \geq |A| = |B|$, the leading monomial of $X(A|B)$ is $y(A)z(B)$ where
$y(A)=y_{a_1}^{(1)}\cdots y_{a_p}^{(p)}$ and
$z(B)=z_{b_1}^{(1)}\cdots z_{b_p}^{(p)}$. It follows that the leading
term of $(A_1|B_1)\cdots(A_r|B_r)$ is $y(A_1)\cdots y(A_r)z(B_1)\cdots z(B_r)$.

Now if $A_1\leq A_2\leq\cdots\leq A_r$, and if $N\geq|A_1|$, we can recover
$A_1$ from $M=y(A_1)\cdots y(A_r)$ as follows.
Let $A_i=\{a_{i1}<\cdots<a_{ip_i}\}$. Then

\begin{equation}
M=\prod_jy_{a_{j1}}^{(1)} \cdots \prod_jy_{a_{js}}^{(s)}\cdots
\end{equation}

Since $A_1 \leq A_2 \leq \cdots \leq A_r$ we have
$a_{1s}\leq a_{2s}\leq\cdots$, and we see that $a_{1s}$ is the least $c$ such that
$y_c^{(s)}$ occurs in $M$. Note that $p_1 \geq p_2 \geq \cdots$ so that
$a_{1s}$ will exist if $a_{is}$ does.

Since $M/y(A_1)=y(A_2)\cdots y(A_r)$ we see that $M$ determines $A_2$, $A_3$, etc. if $N\geq|A_1|$. Similarly $z(B_1)\cdots z(B_r)$ determines $B_1$, $B_2$, etc.
Therefore the leading monomials of the
standard monomials $(A_1|B_1)\cdots(A_r|B_r)$
with $N\geq|A_1|$ and $N\geq|B_1|$ are all distinct and the theorem follows
since $N$ can be arbitrarily large.
\end{proof}

\begin{corollary}
For a generic square matrix $X$, all linear relations between the Laplace
products of $X$ are consequences of those in Theorem~\ref{th1}.
\end{corollary}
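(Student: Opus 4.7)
The plan is to combine the straightening law (Theorem~\ref{th2}) with the linear independence of standard monomials (Theorem~\ref{th4}). Let $\mathcal{I}$ denote the subspace of formal linear combinations of symbols $\{S|T\}$ that is generated by the relations of Theorem~\ref{th1}. The goal is to show that every relation $\sum_i c_i\{A_i|B_i\}=0$ holding in $k[X]$ lies in $\mathcal{I}$. The first observation is that the proof of Theorem~\ref{th2} rewrites each $\{A|B\}$ using only Corollary~\ref{cor1} and Corollary~\ref{cor2}, both of which are direct formal consequences of Theorem~\ref{th1}; hence modulo $\mathcal{I}$ every Laplace product is congruent to a linear combination of good Laplace products.

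Next, a good Laplace product $\{A|B\}$ is, up to sign, the length-two standard monomial $(A|B)(\widetilde A|\widetilde B)$; the goodness conditions $A\le\widetilde A$ and $B\le\widetilde B$ are exactly the inequalities required for this to be a standard monomial. I would then verify that distinct good pairs $(A,B)\ne(A',B')$ yield distinct standard monomials. Equality of the products as polynomials forces the unordered pairs of minor factors to coincide, and the only non-immediate case is $A=\widetilde{A'}$, $B=\widetilde{B'}$. In that case ``$A$ good'' gives $A\le\widetilde A=A'$ and ``$A'$ good'' gives $A'\le A$; the antisymmetry of the partial order then forces $A=A'=\widetilde A$, which is impossible for $n>0$. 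By Theorem~\ref{th4} the good Laplace products are therefore linearly independent in $k[X]$.

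Combining these ingredients, a vanishing relation $\sum_i c_i\{A_i|B_i\}=0$ in $k[X]$ can be straightened modulo $\mathcal{I}$ to a linear combination $\sum_j d_j\{A'_j|B'_j\}$ of distinct good Laplace products, which also vanishes in $k[X]$, so all $d_j=0$ and the original relation lies in $\mathcal{I}$. The main obstacle is the middle step showing distinct good pairs produce distinct standard monomials, but this reduces to the antisymmetry argument above, together with the observation that $A\ne\widetilde A$ for $n>0$ since $A$ and $\widetilde A$ are always disjoint.
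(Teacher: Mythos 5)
Your argument is correct and is essentially the paper's proof: straighten modulo the relations of Theorem~\ref{th1} (noting Corollaries~\ref{cor1} and~\ref{cor2} are formal consequences of it) to reduce to good Laplace products, identify these with standard monomials, and invoke Theorem~\ref{th4}. You additionally verify that distinct good pairs yield distinct standard monomials, a point the paper leaves implicit; your antisymmetry argument for that step is fine.
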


\begin{proof}
By Theorem~\ref{th2} the space of all Laplace products of $X$ is spanned by
those of the form $\{A|B\}$ with $A$ and $B$ good. For these
$\{A|B\}=\pm(A|B)(\widetilde A|\widetilde B)$ is a standard monomial
so these $\{A|B\}$ are linearly independent. Since the only relations needed
to prove Theorem~\ref{th2} are those of Theorem~\ref{th1}, the result follows.
\end{proof}

\end{document}